\numberwithin{equation}{section}
     \newtheorem{thm}{Theorem}[section]
     \newtheorem{cor}[thm]{Corollary}
     \newtheorem{prop}[thm]{Proposition}
     \newtheorem{lem}[thm]{Lemma}
\theoremstyle{definition}
     \newtheorem{defn}{Definition}[section]
     \newtheorem{exmp}{Example}[section]
\theoremstyle{remark}
     \newtheorem{rem}{Remark}[section]
\newcommand{\C}{\mathbb{C}}
\newcommand{\N}{\mathbb{N}}
\newcommand{\R}{\mathbb{R}}
\newcommand{\cY}{\mathcal{Y}}
\newcommand{\supp}{\operatorname{supp}}
\newcommand{\esssup}{\mathop{\mathrm{ess\,sup}}}
\newcommand{\Li}{L^{\infty}}
\newcommand{\LP}{L^{\Phi}}
\newcommand{\dlim}{\displaystyle\lim}
\newcommand{\dsup}{\displaystyle\sup}
\newcommand{\wL}{\mathrm{w}\hskip-0.6pt{L}}
\newcommand{\wLP}{\mathrm{w}\hskip-0.6pt{L}^{\Phi}}
\newcommand{\wLPs}{\mathrm{w}\hskip-0.6pt{L}^{\Psi}}
\newcommand{\iPy}{{\it{\Phi_Y}}}
\newcommand{\pwm}{\mathrm{PWM}}
\newcommand{\Lp}{L^p(\Omega)}
\newcommand{\Linfty}{L^{\infty}(\Omega)}
\newcommand{\msckw}{%
\footnotetext{\hspace{-0.35cm} 2010 {\it Mathematics Subject Classification}. 
46E30.
\endgraf{\it Key words and phrases.} 
Orlicz space, weak Orlicz space, pointwise multiplier. \par
Ryota Kawasumi, 
Department of Intelligent Systems Engineering, Ibaraki University, Hitachi, Ibaraki 316-8511, Japan,
rykawasumi@gmail.com  \par
Eiichi Nakai, Department of Mathematics, Ibaraki University, Mito, Ibaraki 310-8512, Japan,
eiichi.nakai.math@vc.ibaraki.ac.jp  
% $^*$Corresponding author.
}
}
\title{Pointwise multipliers on weak Orlicz spaces \msckw}
\author{Ryota Kawasumi and Eiichi~Nakai}
\date{}
\begin{document}

\baselineskip=18pt

\maketitle

\begin{abstract}
We characterize the pointwise multipliers 
from a weak Orlicz space to another weak Orlicz space.
\end{abstract}

%%%==========================================================
%%%==========================================================
\section{Introduction}\label{sec:intro}
%%%==========================================================
%%%==========================================================

Let $\Omega=(\Omega,\mu)$ be a complete $\sigma$-finite measure space.
We denote by $L^0(\Omega)$ the set of
all measurable functions from $\Omega$ to $\R$ or $\C$.
Then $L^0(\Omega)$ is a linear space 
under the usual sum and scalar multiplication.
Let $E_1,E_2\subset L^0(\Omega)$ be subspaces.
We say that 
a function $g\in L^0(\Omega)$ is a pointwise multiplier from $E_1$ to $E_2$,
if the pointwise multiplication
$fg$ is in $E_2$ for any $f\in E_1$.
We denote by $\pwm(E_1,E_2)$ the set of all pointwise multipliers from $E_1$ to $E_2$.
We abbreviate $\pwm(E,E)$ to $\pwm(E)$.
For example,
\begin{equation*}
 \pwm(L^0(\Omega))=L^0(\Omega).
\end{equation*}
The pointwise multipliers are basic operators on function spaces
and thus the characterization of pointwise multipliers
is not only interesting itself but also sometimes very useful to other study.

%%%-------------------------------------------------------------------
For $p\in(0,\infty]$,
$\Lp$ denotes the usual Lebesgue space equipped with the norm
\begin{align*}
 \|f\|_{\Lp}
 &=\left(\int_{\Omega}|f(x)|^p\,d\mu(x)\right)^{1/p}, 
 \ \text{if} \ p\ne\infty,
\\
 \|f\|_{\Linfty}
 &=\esssup_{x\in\Omega}|f(x)|. 
\end{align*}
Then $\Lp$ is a complete quasi-normed space (quasi-Banach space).
If $p\in[1,\infty]$, then it is a Banach space.
It is well known as H\"older's inequality that
\begin{equation*}
 \|fg\|_{L^{p_2}(\Omega)}
 \le
 \|f\|_{L^{p_1}(\Omega)}
 \|g\|_{L^{p_3}(\Omega)},
\end{equation*}
for $1/p_2=1/p_1+1/p_3$ with $p_i\in(0,\infty]$, $i=1,2,3$.
This shows that
\begin{equation*}
 \pwm(L^{p_1}(\Omega),\,L^{p_2}(\Omega))\supset L^{p_3}(\Omega),
\end{equation*}
and 
\begin{equation*}
 \|g\|_{\pwm(L^{p_1}(\Omega),\,L^{p_2}(\Omega))}\le\|g\|_{L^{p_3}(\Omega)},
\end{equation*}
where $\|g\|_{\pwm(L^{p_1}(\Omega),\,L^{p_2}(\Omega))}$ 
is the operator norm of $g\in\pwm(L^{p_1}(\Omega),\,L^{p_2}(\Omega))$. 
Conversely, we can show the reverse inclusion
by using 
the uniform boundedness theorem or
the closed graph theorem.
That is,
\begin{equation}\label{pwm Lp123}
 \pwm(L^{p_1}(\Omega),L^{p_2}(\Omega))=L^{p_3}(\Omega)
 \quad\text{and}\quad
 \|g\|_{\pwm(L^{p_1}(\Omega),L^{p_2}(\Omega))}=\|g\|_{L^{p_3}(\Omega)}.
\end{equation}
If $p_1=p_2=p$, then
\begin{equation}\label{pwm Linfty}
 \pwm(\Lp)=\Linfty
 \quad\text{and}\quad
 \|g\|_{\pwm(L^{p}(\Omega))}=\|g\|_{\Linfty}.
\end{equation}
Proofs of \eqref{pwm Lp123} and \eqref{pwm Linfty} are
in Maligranda and Persson~\cite[Proposition~3 and Theorem~1]{Maligranda-Persson1989}.
See also \cite{Nakai2017LNA} for a survey on pointwise multipliers.
The characterization \eqref{pwm Lp123} was extended to several function spaces,
for example, Orlicz spaces, Lorentz spaces, Morrey spaces, etc, 
see \cite{CDS08, KLM2013,KLM2014,Lesnik-Tomaszewski2017,Maligranda-Nakai2010,Maligranda-Persson1989,
Nakai1996MOKU,Nakai1997MOKU,Nakai2000SCM,%
Nakai2016Nihonkai,Nakai2017springer} and the references in \cite{Nakai2017LNA}. 

In this paper we give the characterization of pointwise multipliers 
on weak Orlicz spaces.
To do this we first prove a generalized H\"older's inequality for the weak Orlicz spaces.
Next, to characterize the pointwise multipliers, we use the fact that 
all pointwise multipliers from a weak Orlicz space to another weak Orlicz space
are bounded operators.
This fact follows from %the following general property
Theorem~\ref{thm:g-Kantorovich} and Corollary~\ref{cor:g-Kantorovich} bellow.

We always assume that 
the function spaces $E\subset L^0(\Omega)$ have the following property,
see \cite[pages 94]{KantorovichAkilov1982}
in which this property is referred to as $\supp E=\Omega$:
\begin{multline}\label{supp Omega}
\text{If a measurable subset $\Omega_1\subset\Omega$ satisfies that}\\
\mu(\{x\in\Omega:f(x)\ne0\}\setminus\Omega_1)=0 \ 
\text{for every $f\in E$}, \\
\text{then} \ \mu(\Omega\setminus\Omega_1)=0.
\end{multline}
We say that a quasi-normed space $E\subset L^0(\Omega)$ has the lattice property
if the following holds:
\begin{equation}\label{lattice}
 f\in E, \ h\in L^0(\Omega), \ |h|\le|f| \text{ a.e.} 
 \ \ \Longrightarrow \ \ 
 h\in E, \ \|h\|_E\le\|f\|_E.
\end{equation}

Then we have the following theorem:

%%%===========================
\begin{thm}[{\cite[Theorem~2.7]{Nakai2017LNA}}]\label{thm:g-Kantorovich}
%%%===========================
Let a quasi-normed space $E\subset L^0(\Omega)$ have the lattice property \eqref{lattice}. 
For any sequence of functions $f_j\in E$, $j=1,2,\cdots$, 
if $f_j\to0$ in $E$, then
$f_j\to0$ in measure on every measurable set with finite measure.
\end{thm}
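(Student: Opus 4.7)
The plan is a contradiction argument. Assume $\|f_j\|_E \to 0$ but that for some measurable $A \subset \Omega$ with $\mu(A) < \infty$ the sequence does not converge to $0$ in measure on $A$. Extracting a subsequence, we obtain $\varepsilon, \delta > 0$ such that the level sets $B_j := \{x \in A : |f_j(x)| > \varepsilon\}$ satisfy $\mu(B_j) \geq \delta$ for every $j$. Since $\varepsilon \chi_{B_j} \leq |f_j|$ pointwise, the lattice property \eqref{lattice} yields $\chi_{B_j} \in E$ with
\[
 \|\chi_{B_j}\|_E \leq \varepsilon^{-1}\|f_j\|_E \to 0.
\]
The problem is thereby reduced to producing a contradiction from a sequence of characteristic functions of sets $B_j \subset A$ with $\mu(B_j) \geq \delta$ and $\|\chi_{B_j}\|_E \to 0$.

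For this reduced statement, I would first invoke the Aoki--Rolewicz theorem to replace the quasi-norm by an equivalent $p$-subadditive one for some $p \in (0,1]$, and then extract a sub-subsequence $(j_k)$ so that $\sum_k \|\chi_{B_{j_k}}\|_E^p < \infty$. The partial sums $S_N := \sum_{k=1}^N \chi_{B_{j_k}}$ are non-negative, increasing in $N$, and $p$-subadditivity gives $\|S_M - S_N\|_E^p \leq \sum_{k=N+1}^M \|\chi_{B_{j_k}}\|_E^p$, so $(S_N)$ is Cauchy in $E$. In the natural quasi-Banach lattice setting the pointwise monotone limit coincides with the norm limit, so $S := \sum_k \chi_{B_{j_k}} \in E \subset L^0(\Omega)$ and in particular $S$ is finite almost everywhere. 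On the other hand, the reverse Fatou lemma on the finite-measure space $(A,\mu|_A)$ gives
\[
 \mu\bigl(\limsup_k B_{j_k}\bigr) \geq \limsup_k \mu(B_{j_k}) \geq \delta > 0,
\]
while at every $x \in \limsup_k B_{j_k}$ the point lies in infinitely many of the $B_{j_k}$, whence $S(x) = +\infty$ on a set of positive measure, contradicting $S \in L^0(\Omega)$.

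The step I expect to be the main obstacle is precisely the justification that $S$ belongs to $E$: the argument above relies implicitly on a completeness plus Fatou-type property of the ambient lattice, automatic in the quasi-Banach lattices relevant to the paper (including the weak Orlicz spaces that are the sequel's subject) but not formally part of the hypotheses as stated. If one insists on using nothing beyond ``quasi-normed $+$ lattice property'', the cleanest workaround is to avoid the limit function altogether and instead exploit the uniform bound $\|S_N\|_E \leq C$ together with the lattice inequality $\|M\chi_{\{S_N \geq M\}}\|_E \leq \|S_N\|_E \leq C$, so that $\|\chi_{\{S_N \geq M\}}\|_E \leq C/M$; combined with the fact that $\{S_N \geq M\}$ eventually exhausts (modulo arbitrarily small measure) the set $\limsup_k B_{j_k}$, together with the $\sigma$-finiteness of $\Omega$ and \eqref{supp Omega} used to secure a positive lower bound on $\|\chi_{B'}\|_E$ for some fixed $B' \subset \limsup_k B_{j_k}$ of positive measure, one obtains the same contradiction without ever invoking the limit $S$ directly.
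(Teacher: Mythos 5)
The paper itself gives no proof of this theorem; it is imported verbatim from \cite[Theorem~2.7]{Nakai2017LNA}, so your argument has to be measured against the hypotheses as stated: quasi-normed plus the lattice property \eqref{lattice}, with \emph{no} completeness and no Fatou property. Your primary route breaks exactly where you suspect, and the break is not cosmetic. First, $E$ is not assumed complete, so the Cauchy sequence $(S_N)$ need not converge in $E$ at all. Second, even if completeness were added, the step ``the pointwise monotone limit coincides with the norm limit'' is circular: identifying a norm limit in a function lattice with the a.e.\ limit is normally done by passing through convergence in measure, which is precisely the statement being proved. So the conclusion $S\in E\subset L^0(\Omega)$, hence $S<\infty$ a.e., is not reachable from the stated hypotheses by this route.

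Your fallback is essentially the right argument, but as written it still has a hole at the last step. From $\|M\chi_{\{S_N\ge M\}}\|_E\le\|S_N\|_E\le C$ you correctly get $\|\chi_{\{S_N\ge M\}}\|_E\le C/M$; the difficulty is transferring this to a \emph{fixed} set of positive measure. ``$\{S_N\ge M\}$ exhausts $\limsup_k B_{j_k}$ modulo arbitrarily small measure'' is not enough, because in a general quasi-normed lattice small measure gives no control on the norm of an indicator, so you cannot pass the bound to $\chi_{B'}$ when $B'$ is only \emph{almost} contained in $\{S_N\ge M\}$; and neither \eqref{supp Omega} nor $\sigma$-finiteness yields the ``positive lower bound on $\|\chi_{B'}\|_E$'' you invoke --- nor is one needed. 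The correct closing move is a Borel--Cantelli-type intersection: with $e=\limsup_k B_{j_k}$ and $\mu(e)\ge\delta$, choose for each $M$ an index $N_M$ with $\mu\bigl(e\setminus\{S_{N_M}\ge M\}\bigr)<\delta 2^{-M-2}$, and set $B'=e\cap\bigcap_M\{S_{N_M}\ge M\}$. Then $\mu(B')\ge\delta/2>0$, while $\chi_{B'}\le\chi_{\{S_{N_M}\ge M\}}$ for every $M$, so \eqref{lattice} gives $\|\chi_{B'}\|_E\le C/M$ for all $M$, i.e.\ $\|\chi_{B'}\|_E=0$; the definiteness of the quasi-norm then forces $\chi_{B'}=0$ a.e., contradicting $\mu(B')>0$. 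With this repair the proof works under the stated hypotheses; the Aoki--Rolewicz renorming is also dispensable, since choosing the subsequence with $\|\chi_{B_{j_k}}\|_E\le(2K)^{-k}$ ($K$ the quasi-triangle constant) already bounds all the finite sums the argument actually uses.
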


Using the closed graph theorem, we have the following corollary:

%%%===========================
\begin{cor}[{\cite[Corollary~2.8]{Nakai2017LNA}}]\label{cor:g-Kantorovich}
%%%===========================
If $E_1$ and $E_2$ are 
complete quasi-normed spaces with the lattice property \eqref{lattice}, 
then all $g\in\pwm(E_1,E_2)$ are bounded operators.
\end{cor}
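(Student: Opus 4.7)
The plan is to verify that the multiplication operator $T_g\colon E_1\to E_2$ defined by $T_g(f)=fg$ has closed graph, and then invoke the closed graph theorem. Since $E_1$ and $E_2$ are complete quasi-normed spaces, the Aoki--Rolewicz theorem gives equivalent $p$-norms inducing complete translation-invariant metrics, so $E_1,E_2$ are $F$-spaces and the closed graph theorem applies. Linearity of $T_g$ is immediate from the pointwise definition.

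To verify closedness, I would take a sequence $f_j\in E_1$ with $f_j\to f$ in $E_1$ and $f_jg\to h$ in $E_2$, and show $h=fg$ a.e. Since $E_1$ has the lattice property, Theorem~\ref{thm:g-Kantorovich} applied to $f_j-f$ yields that $f_j\to f$ in measure on every set of finite measure; similarly $f_jg\to h$ in measure on every such set, using the lattice property of $E_2$. The next step is to pass the multiplication by $g$ through the convergence in measure: on any set $A$ with $\mu(A)<\infty$, the function $g$ is finite a.e., so given $\ve,\delta>0$ one chooses $N$ with $\mu(\{x\in A:|g(x)|>N\})<\delta/2$ and then uses
\[
 \{x\in A:|f_j(x)-f(x)|\,|g(x)|>\ve\}\subset\{x\in A:|g(x)|>N\}\cup\{x\in A:|f_j(x)-f(x)|>\ve/N\},
\]
whose right-hand side has measure less than $\delta$ for $j$ large. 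Hence $f_jg\to fg$ in measure on $A$. Uniqueness of the limit in measure forces $h=fg$ a.e.\ on $A$, and then $\sigma$-finiteness of $\Omega$ gives $h=fg$ a.e.\ on $\Omega$.

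With the graph of $T_g$ closed, the closed graph theorem yields a quasi-norm estimate $\|fg\|_{E_2}\le C\|f\|_{E_1}$ for all $f\in E_1$, which is exactly the assertion that $g$ acts as a bounded operator.

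I expect no serious obstacle; the only slightly delicate points are (a) making sure the closed graph theorem is applicable in the complete quasi-normed (i.e.\ $F$-space) setting, which is standard via Aoki--Rolewicz, and (b) the elementary but necessary verification that multiplication by an a.e.-finite measurable function preserves convergence in measure on sets of finite measure. Both steps are routine once one has Theorem~\ref{thm:g-Kantorovich} at hand.
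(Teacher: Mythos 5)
Your proposal is correct and follows exactly the route the paper indicates: the paper derives this corollary from Theorem~\ref{thm:g-Kantorovich} via the closed graph theorem (citing \cite[Corollary~2.8]{Nakai2017LNA} for details), and your verification that the graph of $T_g$ is closed --- using convergence in measure on finite-measure sets, the truncation of $g$ at level $N$, and uniqueness of limits in measure together with $\sigma$-finiteness --- is the standard and complete way to fill in that argument.
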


Since the weak Orlicz spaces are 
complete quasi-normed spaces with the lattice property \eqref{lattice}, 
all pointwise multipliers from a weak Orlicz space to another weak Orlicz space
are bounded operators.

Orlicz spaces are introduced by \cite{Orlicz1932,Orlicz1936}.
For the theory of Orlicz spaces,
see \cite{Kita2009,Kokilashvili-Krbec1991,Krasnoselsky-Rutitsky1961,Maligranda1989,Rao-Ren1991}
for example.
See also \cite{Iaffei1996} for the weak Orlicz space.

The organization of this paper is as follows. 
We recall the definitions of the Young functions and the weak Orlicz spaces 
in Section~\ref{sec:Young}.
Then we state main results in Section~\ref{sec:result}. 
The proof method is the same as \cite{Maligranda-Nakai2010}.
However we need to investigate 
the properties of the quasi-norm on the weak Orlicz space.
We do this in Section~\ref{sec:prop}
to prove the main results in Section~\ref{sec:proof}.

%%%==========================================================
%%%==========================================================
\section{Young functions and weak Orlicz spaces}\label{sec:Young}
%%%==========================================================
%%%==========================================================

For an increasing function 
$\Phi:[0,\infty]\to[0,\infty]$,
let
\begin{equation*} 
 a(\Phi)=\sup\{t\ge0:\Phi(t)=0\}, \quad 
 b(\Phi)=\inf\{t\ge0:\Phi(t)=\infty\},
% \quad(\inf\emptyset=\infty).
\end{equation*} 
with convention $\sup\emptyset=0$ and $\inf\emptyset=\infty$.
Then $0\le a(\Phi)\le b(\Phi)\le\infty$.

%%%===========================
\begin{defn}[Young function]\label{defn:Young}
%%%===========================
An increasing function $\Phi:[0,\infty]\to[0,\infty]$ is called a Young function 
(or sometimes also called an Orlicz function) 
if it satisfies the following properties;
\begin{align}\label{ab}
% &\text{$a(\Phi)<\infty$, $b(\Phi)>0$}, \\
 &0\le a(\Phi)<\infty, \ 0<b(\Phi)\le\infty, \\
 &\lim_{t\to+0}\Phi(t)=\Phi(0)=0, \label{lim_0} \\
% &\text{$\Phi$ is left continuous on $[0,b(\Phi))$}, \label{left cont} \\
 &\text{$\Phi$ is convex $[0,b(\Phi))$}, \label{convex} \\
%\intertext{and,}
 &\text{if $b(\Phi)=\infty$, then } 
 \lim_{t\to\infty}\Phi(t)=\Phi(\infty)=\infty, \label{left cont infty} \\
 &\text{if $b(\Phi)<\infty$, then } 
 \lim_{t\to b(\Phi)-0}\Phi(t)=\Phi(b(\Phi)) \ (\le\infty). \label{left cont b}
\end{align}
\end{defn}

In what follows,
if an increasing and convex function $\Phi:[0,\infty)\to[0,\infty)$ satisfies
\eqref{lim_0} and $\dlim_{t\to\infty}\Phi(t)=\infty$,
then we always regard that $\Phi(\infty)=\infty$ and that $\Phi$ is a Young function.

%Any Young function is neither identically zero nor identically infinity 
%on $(0,\infty)$.
%By the convexity $\Phi$ is continuous on $[0,b(\Phi))$.
We denote by $\iPy$ the set of all Young functions.
We also define three subsets $\cY^{(i)}$ ($i=1,2,3$) of $\iPy$ as %Young functions as 
\begin{align*}
 \cY^{(1)}
 &=
 \left\{\Phi\in\iPy:b(\Phi)=\infty\right\}, 
\\
 \cY^{(2)}
 &=
 \left\{\Phi\in\iPy:b(\Phi)<\infty,\ \Phi(b(\Phi))=\infty\right\}, 
\\
 \cY^{(3)}
 &=
 \left\{\Phi\in\iPy:b(\Phi)<\infty,\ \Phi(b(\Phi))<\infty\right\}.
\end{align*}

%%%===========================
\begin{rem}\label{rem:Young}
%%%===========================
We have the following properties of $\Phi\in\iPy$: 
%(see Propositions~\ref{prop:convex diff}--\ref{prop:convex diff 3}):
\begin{enumerate}
\item[(Y1)]
If $\Phi\in\cY^{(1)}$, 
then $\Phi$ is absolutely continuous 
on any closed interval in $[0,\infty)$,
and $\Phi$ is bijective from $[a(\Phi),\infty)$ to $[0,\infty)$.
\item[(Y2)] 
If $\Phi\in\cY^{(2)}$, 
then $\Phi$ is absolutely continuous 
on any closed interval in $[0,b(\Phi))$,
and $\Phi$ is bijective from $[a(\Phi),b(\Phi))$ to $[0,\infty)$.
\item[(Y3)] 
If $\Phi\in\cY^{(3)}$, 
then $\Phi$ is absolutely continuous on $[0,b(\Phi)]$ 
and $\Phi$ is bijective from $[a(\Phi),b(\Phi)]$ to $[0,\Phi(b(\Phi))]$.
\item[(Y4)]
If $\Phi\in\cY^{(3)}$ and $0<\delta<1$, 
then there exists a Young function $\Psi\in\cY^{(2)}$ such that $b(\Phi)=b(\Psi)$ and
$$
 \Psi(\delta t) \le \Phi(t) \le \Psi(t) \quad\text{for all} \ t\in[0,\infty).
$$
To see this we only set $\Psi=\Phi+\Theta$, where
we choose $\Theta\in\cY^{(2)}$
such that $a(\Theta)=\delta\,b(\Phi)$ and $b(\Theta)=b(\Phi)$.
\end{enumerate}
\end{rem}

%%%===========================
\begin{defn}\label{defn:LP}
%%%===========================
For a Young function $\Phi$, let
\begin{align*}
  \LP(\Omega)
  &= \left\{ f\in L^0(\Omega):
     \int_{\Omega} \Phi(\epsilon |f(x)|)\,d\mu(x)<\infty
                 \;\text{for some}\; \epsilon>0 
    \right\}, \\
  \|f\|_{\LP(\Omega)} &=
  \inf\left\{ \lambda>0: 
    \int_{\Omega} \!\Phi\!\left(\frac{|f(x)|}{\lambda}\right) d\mu(x)
      \le 1
      \right\}, \\
  \wLP(\Omega)
  &= \left\{ f\in L^0(\Omega):
     \sup_{t\in(0,\infty)}\Phi(t)\,\mu(\epsilon f, t)<\infty
                 \;\text{for some}\; \epsilon>0 
    \right\}, \\
  \|f\|_{\wLP(\Omega)} &=
  \inf\left\{ \lambda>0: 
    \sup_{t\in(0,\infty)}\Phi(t)\,\mu\!\left(\frac{f}{\lambda}, t\right)
      \le 1
      \right\}, \\
  &\text{where} \quad \mu(f,t)=\mu\big(\{x\in\Omega:|f(x)|>t\}\big).
\end{align*}
\end{defn}
Then $\|\cdot\|_{\LP(\Omega)}$ is a norm and thereby $\LP(\Omega)$ is a Banach space,
and $\|\cdot\|_{\wLP(\Omega)}$ is a quasi-norm and thereby $\LP(\Omega)$ is a complete quasi-normed space
(quasi-Banach space).
For any Young function $\Phi$, 
\begin{equation*}
 \LP(\Omega)\subset\wLP(\Omega)
 \quad\text{with}\quad
 \|f\|_{\wLP(\Omega)}\le\|f\|_{\LP(\Omega)}.
\end{equation*}
Let
\begin{equation*}
 \Phi_{(\infty)}
 =
 \begin{cases}
  0,  & t\in[0,1], \\
  \infty, & t\in(1,\infty].
 \end{cases}
\end{equation*}
Then $\Phi_{(\infty)}$ is a Young function and
\begin{equation*}
 L^{\Phi_{(\infty)}}(\Omega)=\wL^{\Phi_{(\infty)}}(\Omega)=\Li(\Omega)
 \quad\text{with}\quad
 \|f\|_{L^{\Phi_{(\infty)}}(\Omega)}=\|f\|_{\wL^{\Phi_{(\infty)}}(\Omega)}=\|f\|_{\Li(\Omega)}.
\end{equation*}
If $\Phi$ be a Young function with $b(\Phi)<\infty$,
then $\Phi_{(\infty)}(t)\le\Phi(b(\Phi)t)$ for all $t\in[0,\infty]$.
Hence,
\begin{equation*}
 \wLP(\Omega)\subset\Li(\Omega)
 \quad\text{with}\quad
 \|f\|_{\Li(\Omega)}\le b(\Phi)\|f\|_{\wLP(\Omega)}.
\end{equation*}

We note that
\begin{equation}\label{weak norm equiv}
 \sup_{t\in(0,\infty)}t\,\mu(\Phi(|f|),t)
 =
 \sup_{t\in(0,\infty)}\Phi(t)\,\mu(f,t),
\end{equation}
and then
\begin{align*}
  \|f\|_{\wLP(\Omega)} &=
  \inf\left\{ \lambda>0: 
    \sup_{t\in(0,\infty)}\Phi(t)\,\mu\!\left(\frac{f}{\lambda}, t\right)
      \le 1
      \right\}, \\
  &=
  \inf\left\{ \lambda>0: 
    \sup_{t\in(0,\infty)}t\;\mu\!\left(\Phi\left(\frac{|f|}{\lambda}\right), t\right)
      \le 1
      \right\}.
\end{align*}
We give a proof of \eqref{weak norm equiv} for readers' convenience, 
see Proposition~\ref{prop:weak norm eq}.

%%%--------------------------------------------------
Next we recall the generalized inverse of Young function $\Phi$
in the sense of O'Neil \cite[Definition~1.2]{ONeil1965}.

%%%===========================
\begin{defn}\label{defn:ginverse}
%%%===========================
For a Young function $\Phi$, let
\begin{equation}\label{inverse}
 \Phi^{-1}(u)
 = 
\begin{cases}
 \inf\{t\ge0: \Phi(t)>u\}, & u\in[0,\infty), \\
 \infty, & u=\infty.
\end{cases}
\end{equation}
\end{defn}
Then 
$\Phi^{-1}(u)$ is finite for all $u\in[0,\infty)$,
continuous on $(0,\infty)$ 
and right continuous at $u=0$.
If $\Phi$ is bijective from $[0,\infty]$ to itself, 
then $\Phi^{-1}$ is the usual inverse function of $\Phi$.

%%%===========================
\begin{rem}\label{rem:g-inverse}
%%%===========================
We have the following properties of $\Phi\in\iPy$
and its inverse:
\begin{enumerate}
\item[(P1)]
$\Phi(\Phi^{-1}(t)) \le t \le \Phi^{-1}(\Phi(t))$ for all $t\in[0,\infty]$
(Property 1.3 in \cite{ONeil1965}).
\item[(P2)]
$\Phi^{-1}(\Phi(t))=t$ if $\Phi(t)\in(0,\infty)$.
\item[(P3)]
If $\Phi\in\cY^{(1)}\cup\cY^{(2)}$, then
$\Phi(\Phi^{-1}(u))=u$ for all $u\in[0,\infty]$.
\end{enumerate}
\end{rem}

%%%===========================
\begin{rem}\label{rem:inverse another}
%%%===========================
Sometimes one defines
\begin{equation}\label{inverse another}
 \Phi^{-1}(u)
 = 
 \inf\{t\ge0: \Phi(t)>u\} \ (u\in[0,\infty))
 \quad\text{and}\quad
 \Phi^{-1}(\infty)=\lim_{u\to\infty}\Phi^{-1}(u).
\end{equation}
In this case 
$\Phi(\Phi^{-1}(u)) \le u$ for all $u\in[0,\infty)$ 
and $t \le \Phi^{-1}(\Phi(t))$ if $\Phi(t)\in[0,\infty)$.
\end{rem}

%%%==========================================================
%%%==========================================================
\section{Main results}\label{sec:result}
%%%==========================================================
%%%==========================================================

For Young functions $\Phi_1$ and $\Phi_2$,
we denote by $\|g\|_{\pwm(\wL^{\Phi_1}(\Omega),\wL^{\Phi_2}(\Omega))}$
the operator norm of $g\in\pwm(\wL^{\Phi_1}(\Omega),\wL^{\Phi_2}(\Omega))$.
The following result is a generalized H\"older's inequality for the weak Orlicz spaces.

%%%===========================
\begin{thm}\label{thm:Holder}
%%%===========================
Let $\Phi_i$, $i=1,2,3$, be Young functions.
If there exists a positive constant $C$ such that, for all $u\in(0,\infty)$,
\begin{equation}\label{assumption1}
 \Phi_1^{-1}(u)\Phi_3^{-1}(u)\le C\Phi_2^{-1}(u),
\end{equation}
then, for all $f\in\wL^{\Phi_1}(\Omega)$ and $g\in\wL^{\Phi_3}(\Omega)$,
\begin{equation*}
 \|fg\|_{\wL^{\Phi_2}(\Omega)}\le 4C\|f\|_{\wL^{\Phi_1}(\Omega)}\|g\|_{\wL^{\Phi_3}(\Omega)}.
\end{equation*}
Consequently,
\begin{equation*}
 \wL^{\Phi_3}(\Omega)\subset\pwm(\wL^{\Phi_1}(\Omega),\wL^{\Phi_2}(\Omega)),
\end{equation*}
and,
for all $g\in L^{\Phi_3}(\Omega)$,
\begin{equation*}
 \|g\|_{\pwm(\wL^{\Phi_1}(\Omega),\wL^{\Phi_2}(\Omega))}\le 4C\|g\|_{\wL^{\Phi_3}(\Omega)}.
\end{equation*}
\end{thm}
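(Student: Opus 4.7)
The core plan is to bound the distribution function of $fg$ via those of $f$ and $g$, plug in the generalized inverses supplied by the hypothesis, and then convert back to the weak Orlicz quasi-norm using convexity of $\Phi_2$.

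By homogeneity I would first normalize: it suffices to take $f,g\not\equiv 0$ and assume $\|f\|_{\wL^{\Phi_1}(\Omega)}=\|g\|_{\wL^{\Phi_3}(\Omega)}=1$. The assumption on the weak quasi-norms (which by the remarks after Definition~\ref{defn:LP} is attained at the infimum) then reads
\[
 \Phi_1(t)\,\mu(f,t)\le 1,\qquad \Phi_3(t)\,\mu(g,t)\le 1\qquad(t>0).
\]
The first main step is the pointwise product inclusion
\[
 \{x:|f(x)g(x)|>ab\}\subset \{x:|f(x)|>a\}\cup\{x:|g(x)|>b\},
\]
valid for any $a,b>0$, which yields $\mu(fg,ab)\le \mu(f,a)+\mu(g,b)$.

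Next, for each $u\in(0,\infty)$ I would set $a=\Phi_1^{-1}(u)$ and $b=\Phi_3^{-1}(u)$. By hypothesis $ab\le C\Phi_2^{-1}(u)$, so since the distribution function is decreasing,
\[
 \mu\!\bigl(fg,\,C\Phi_2^{-1}(u)\bigr)\le \mu\!\bigl(f,\Phi_1^{-1}(u)\bigr)+\mu\!\bigl(g,\Phi_3^{-1}(u)\bigr).
\]
Here is the subtle spot, which I expect to be the main annoyance: $\Phi_1^{-1}$ is only a generalized inverse, so $\Phi_1(\Phi_1^{-1}(u))$ need not equal $u$. However, for every $t>\Phi_1^{-1}(u)$ one has $\Phi_1(t)>u$ (using that the super-level set of an increasing function is an interval and the definition \eqref{inverse}), hence $\mu(f,t)\le 1/\Phi_1(t)<1/u$. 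Letting $t\downarrow \Phi_1^{-1}(u)$ and using the right continuity of the distribution function gives $\mu(f,\Phi_1^{-1}(u))\le 1/u$, and similarly for $g$. Thus
\[
 \mu\!\bigl(fg,\,C\Phi_2^{-1}(u)\bigr)\le \frac{2}{u}\qquad(u>0).
\]

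Finally I would convert this back to the weak $L^{\Phi_2}$ quasi-norm. For an arbitrary $s>0$ with $\Phi_2(s/C)>0$, pick any $u<\Phi_2(s/C)$; then $\Phi_2^{-1}(u)\le s/C$ by \eqref{inverse}, so the monotonicity of $\mu(fg,\cdot)$ gives $\mu(fg,s)\le\mu(fg,C\Phi_2^{-1}(u))\le 2/u$, and letting $u\uparrow\Phi_2(s/C)$ yields
\[
 \Phi_2(s/C)\,\mu(fg,s)\le 2\qquad(s>0).
\]
Now convexity of $\Phi_2$ together with $\Phi_2(0)=0$ implies $\Phi_2(s/(4C))\le\tfrac14\Phi_2(s/C)$, so $\Phi_2(s/(4C))\mu(fg,s)\le 1/2\le 1$ for every $s>0$. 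By the definition of $\|\cdot\|_{\wL^{\Phi_2}}$ this means $\|fg\|_{\wL^{\Phi_2}(\Omega)}\le 4C$, completing the normalized case. Un-normalizing gives the stated inequality, and the consequences about $\pwm$ and its operator norm follow immediately by applying the inequality for fixed $g$ to arbitrary $f\in\wL^{\Phi_1}(\Omega)$.

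The only nontrivial obstacle is the handling of $\Phi^{-1}$: one must repeatedly use \eqref{inverse} and the right continuity of distribution functions to replace the hoped-for identity $\Phi(\Phi^{-1}(u))=u$ (which fails in $\cY^{(3)}$ and at jump points) by workable one-sided inequalities. Everything else is a routine combination of the trivial product-distribution bound with the convexity of the target Young function.
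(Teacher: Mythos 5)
Your proposal is correct. Every step checks out: the normalization is legitimate since the quasi-norm is homogeneous and vanishes only on the null function; the bound $\mu(f,\Phi_1^{-1}(u))\le 1/u$ is correctly extracted from $\sup_t\Phi_1(t)\mu(f,t)\le 1$ (which the paper justifies via \eqref{le1} and Proposition~\ref{prop:weak norm eq}) together with the right continuity of the distribution function; and the passage back from $\mu(fg,C\Phi_2^{-1}(u))\le 2/u$ to $\Phi_2(s/C)\mu(fg,s)\le 2$ via $u\uparrow\Phi_2(s/C)$ is sound, including the degenerate cases $\Phi_2(s/C)\in\{0,\infty\}$.

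Your route is, however, genuinely different in execution from the paper's. The paper follows O'Neil's scheme: it introduces $h=\max(\Phi_1(f),\Phi_3(g))$, uses (P1) and \eqref{assumption1} to derive the \emph{pointwise} inequality $\Phi_2(fg/C)\le\Phi_1(f)+\Phi_3(g)$, and only then passes to distribution functions, estimating $\sup_t t\,\mu(\Phi_1(f)+\Phi_3(g),2t)$ by a sum-splitting; the two factors of $2$ (one from splitting the level, one from adding the two unit bounds) produce the constant $4C$. You instead stay entirely at the level of distribution functions, using the union bound $\mu(fg,ab)\le\mu(f,a)+\mu(g,b)$ with the matched levels $a=\Phi_1^{-1}(u)$, $b=\Phi_3^{-1}(u)$, and you replace the paper's reliance on the identities (P1)--(P3) for the generalized inverse by a one-sided limiting argument. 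A pleasant by-product you did not exploit: your union bound costs only a single additive factor of $2$, so one halving $\Phi_2(s/(2C))\le\tfrac12\Phi_2(s/C)$ already yields $\|fg\|_{\wL^{\Phi_2}(\Omega)}\le 2C$, matching O'Neil's constant for the strong Orlicz spaces and improving the paper's $4C$; the $4C$ you state is of course still valid.
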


For the Orlicz spaces, it is known by O'Neil~\cite{ONeil1965} that, 
if \eqref{assumption1} holds, 
then 
\begin{equation*}
 \|fg\|_{L^{\Phi_2}(\Omega)}\le 2C\|f\|_{L^{\Phi_1}(\Omega)}\|g\|_{L^{\Phi_3}(\Omega)}.
\end{equation*}

Next, we state our main result.

%%%===========================
\begin{thm}\label{thm:converse}
%%%===========================
Let $\Phi_i$, $i=1,2,3$, be Young functions.
If there exists a positive constant $C$ such that, for all $u\in(0,\infty)$,
\begin{equation}\label{assumption2}
 \Phi_2^{-1}(u)\le C\Phi_1^{-1}(u)\Phi_3^{-1}(u),
\end{equation}
then
\begin{equation*}
 \pwm(\wL^{\Phi_1}(\Omega),\wL^{\Phi_2}(\Omega))\subset \wL^{\Phi_3}(\Omega),
\end{equation*}
and, 
for all $g\in\pwm(\wL^{\Phi_1}(\Omega),\wL^{\Phi_2}(\Omega))$,
\begin{equation*}
 \|g\|_{\wL^{\Phi_3}(\Omega)}
 \le
 C\|g\|_{\pwm(\wL^{\Phi_1}(\Omega),\wL^{\Phi_2}(\Omega))}.
\end{equation*}
\end{thm}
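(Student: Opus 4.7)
The plan is as follows. Let $g \in \pwm(\wL^{\Phi_1}(\Omega), \wL^{\Phi_2}(\Omega))$ and set $M = \|g\|_{\pwm(\wL^{\Phi_1}(\Omega),\wL^{\Phi_2}(\Omega))}$, which is finite by Corollary~\ref{cor:g-Kantorovich}. Writing $E_s = \{x \in \Omega : |g(x)| > s\}$ for $s > 0$, the target estimate $\|g\|_{\wL^{\Phi_3}(\Omega)} \le CM$ unpacks, via the substitution $s = \lambda t$ inside the defining infimum of the weak Orlicz quasi-norm, to the single requirement
$$
\Phi_3\!\left(\frac{s}{CM}\right)\mu(E_s) \le 1 \qquad\text{for every } s>0.
$$

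To establish this, I would probe $g$ by testing the multiplier inequality against characteristic functions. Fix $s>0$ and suppose first $0<\mu(E_s)<\infty$; take $f=\chi_{E_s}$. Two short computations furnish
$$
\|\chi_{E_s}\|_{\wL^{\Phi_1}(\Omega)} = \frac{1}{\Phi_1^{-1}(1/\mu(E_s))}, \qquad \|g\chi_{E_s}\|_{\wL^{\Phi_2}(\Omega)} \ge \frac{s}{\Phi_2^{-1}(1/\mu(E_s))}.
$$
The first identity comes from $\mu(\chi_{E_s}/\lambda, t)=\mu(E_s)$ for $t<1/\lambda$ and $0$ otherwise; the second uses that $\mu(g\chi_{E_s},t)=\mu(E_s)$ for $0<t<s$, so whenever $\lambda < s/\Phi_2^{-1}(1/\mu(E_s))$ there exists $t\in(\Phi_2^{-1}(1/\mu(E_s)), s/\lambda)$ on which $\Phi_2(t)\mu(E_s)>1$, excluding such $\lambda$ from the defining infimum. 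Combining these with $\|fg\|_{\wL^{\Phi_2}(\Omega)} \le M\|f\|_{\wL^{\Phi_1}(\Omega)}$ and the hypothesis \eqref{assumption2}, the factor $\Phi_1^{-1}(1/\mu(E_s))$ cancels on both sides, yielding $s \le CM\,\Phi_3^{-1}(1/\mu(E_s))$. Invoking property~(P1) of Remark~\ref{rem:g-inverse} together with the monotonicity of $\Phi_3$ then delivers $\Phi_3(s/(CM))\mu(E_s)\le 1$.

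It remains to dispose of the degenerate cases. The case $\mu(E_s)=0$ is trivial. The case $\mu(E_s)=\infty$ I would handle by invoking $\sigma$-finiteness to exhaust $E_s$ by measurable sets $F_n\subset E_s$ with $\mu(F_n)<\infty$ and $\mu(F_n)\nearrow\infty$, rerunning the argument above with $\chi_{F_n}$ in place of $\chi_{E_s}$ to obtain $s\le CM\,\Phi_3^{-1}(1/\mu(F_n))$. Passing $n\to\infty$ and using that $\Phi_3^{-1}$ is right continuous at $0$ with $\Phi_3^{-1}(0)=a(\Phi_3)$ forces $s\le CM\,a(\Phi_3)$, whence $s/(CM)\le a(\Phi_3)$ and $\Phi_3(s/(CM))=0$, so the required inequality holds vacuously. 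When $a(\Phi_3)=0$ the same limit produces the contradiction $s\le 0$, meaning $\mu(E_s)$ was automatically finite and the degenerate case could not arise.

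The main obstacle I anticipate is the verification of the lower bound on $\|g\chi_{E_s}\|_{\wL^{\Phi_2}(\Omega)}$: the weak Orlicz quasi-norm is an infimum over $\lambda$ of a constraint phrased through a supremum over $t$, so one has to interleave carefully the definition of the generalized inverse with the strict and non-strict inequalities produced by that supremum. Property~(P1) is what absorbs the remaining boundary case $s/(CM)=\Phi_3^{-1}(1/\mu(E_s))$ and converts the closed inequality $s\le CM\,\Phi_3^{-1}(1/\mu(E_s))$ into the desired $\Phi_3(s/(CM))\mu(E_s)\le 1$; without (P1) one would only recover the constraint off the jump set of $\Phi_3$.
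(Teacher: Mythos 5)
Your proof is correct, and it takes a genuinely different route from the paper's. The paper first treats a finitely simple $g$, builds the test function $h=\Phi_1^{-1}\bigl(\Phi_3(|g|/\|g\|_{\wL^{\Phi_3}(\Omega)})\bigr)$, shows $\|h\|_{\wL^{\Phi_1}(\Omega)}\le1$ and $\Phi_2\bigl(Ch\,|g|/\|g\|_{\wL^{\Phi_3}(\Omega)}\bigr)\ge\Phi_3\bigl(|g|/\|g\|_{\wL^{\Phi_3}(\Omega)}\bigr)$, and then needs three further ingredients: Proposition~\ref{prop:simple=1} (finitely simple functions attain the value $1$ in the modular at their norm), approximation of general $g$ by simple functions together with the Fatou property \eqref{Fatou}, and a separate perturbation via (Y4) when $\Phi_2$ or $\Phi_3$ lies in $\cY^{(3)}$. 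You instead test the multiplier only against indicators of the super-level sets $E_s=\{|g|>s\}$, exploiting the fact that the weak quasi-norm of $g$ is determined entirely by the distribution function $s\mapsto\mu(E_s)$; your exact evaluation $\|\chi_{E}\|_{\wL^{\Phi_1}(\Omega)}=1/\Phi_1^{-1}(1/\mu(E))$ and the lower bound $\|g\chi_{E_s}\|_{\wL^{\Phi_2}(\Omega)}\ge s/\Phi_2^{-1}(1/\mu(E_s))$ are both correct and hold for arbitrary Young functions, so no case distinction on $\cY^{(3)}$, no norming proposition, and no approximation/Fatou step is required, with $\sigma$-finiteness used only to dispose of $\mu(E_s)=\infty$ via $\Phi_3^{-1}(0+)=a(\Phi_3)$. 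Both arguments invoke Corollary~\ref{cor:g-Kantorovich} for the boundedness of $g$ as a multiplier and use (P1) and \eqref{assumption2} at the same junctures, and both yield the same constant $C$. What the paper's construction buys is that it transfers essentially verbatim to the strong Orlicz setting of \cite{Maligranda-Nakai2010}, where indicators do not determine the norm; what your argument buys is a shorter, uniform proof that is special to the weak scale.
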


In \cite{Maligranda-Nakai2010}
it was shown that, 
if \eqref{assumption2} holds, 
then 
\begin{equation*}
 \pwm(L^{\Phi_1}(\Omega),L^{\Phi_2}(\Omega))\subset L^{\Phi_3}(\Omega),
\end{equation*}
and, 
for all $g\in\pwm(L^{\Phi_1}(\Omega),L^{\Phi_2}(\Omega))$,
\begin{equation*}
 \|g\|_{L^{\Phi_3}(\Omega)}
 \le
 C\|g\|_{\pwm(L^{\Phi_1}(\Omega),L^{\Phi_2}(\Omega))}.
\end{equation*}

%%%===========================
\begin{cor}\label{cor:pwm wO}
%%%===========================
Let $\Phi_i$, $i=1,2,3$, be Young functions.
If there exists a positive constant $C_i$, $i=1,2$, such that, for all $u\in(0,\infty)$,
\begin{equation*}
 C_1^{-1}\Phi_2^{-1}(u)
 \le
 \Phi_1^{-1}(u)\Phi_3^{-1}(u)
 \le
 C_2\Phi_2^{-1}(u),
\end{equation*}
then
\begin{equation*}
 \pwm(\wL^{\Phi_1}(\Omega),\wL^{\Phi_2}(\Omega))=\wL^{\Phi_3}(\Omega),
\end{equation*}
and
\begin{equation*}
 C_1^{-1}\|g\|_{\wL^{\Phi_3}(\Omega)}
 \le
 \|g\|_{\pwm(\wL^{\Phi_1}(\Omega),\wL^{\Phi_2}(\Omega))}
 \le
 4C_2\|g\|_{\wL^{\Phi_3}(\Omega)}.
\end{equation*}
\end{cor}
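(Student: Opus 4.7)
The plan is to observe that Corollary~\ref{cor:pwm wO} is simply the conjunction of Theorem~\ref{thm:Holder} and Theorem~\ref{thm:converse}, obtained by splitting the two-sided inequality on the inverses into its two one-sided halves and feeding each half into the appropriate theorem. So no new ideas are needed; the work is bookkeeping.

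First, I would rewrite the hypothesis
\[
 C_1^{-1}\Phi_2^{-1}(u) \le \Phi_1^{-1}(u)\Phi_3^{-1}(u) \le C_2\Phi_2^{-1}(u), \quad u\in(0,\infty),
\]
as two separate conditions. The right-hand inequality $\Phi_1^{-1}(u)\Phi_3^{-1}(u) \le C_2 \Phi_2^{-1}(u)$ is exactly assumption \eqref{assumption1} of Theorem~\ref{thm:Holder} with constant $C_2$. Applying that theorem yields the inclusion $\wL^{\Phi_3}(\Omega)\subset\pwm(\wL^{\Phi_1}(\Omega),\wL^{\Phi_2}(\Omega))$ together with the operator norm estimate $\|g\|_{\pwm(\wL^{\Phi_1}(\Omega),\wL^{\Phi_2}(\Omega))} \le 4C_2 \|g\|_{\wL^{\Phi_3}(\Omega)}$, which is the right-hand half of the desired norm comparison.

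Next, I would read the left-hand inequality as $\Phi_2^{-1}(u) \le C_1 \Phi_1^{-1}(u)\Phi_3^{-1}(u)$, which is exactly assumption \eqref{assumption2} of Theorem~\ref{thm:converse} with constant $C_1$. That theorem then delivers the reverse inclusion $\pwm(\wL^{\Phi_1}(\Omega),\wL^{\Phi_2}(\Omega))\subset \wL^{\Phi_3}(\Omega)$ together with the estimate $\|g\|_{\wL^{\Phi_3}(\Omega)}\le C_1 \|g\|_{\pwm(\wL^{\Phi_1}(\Omega),\wL^{\Phi_2}(\Omega))}$, i.e.\ the left-hand half of the norm comparison (after rearranging to $C_1^{-1}\|g\|_{\wL^{\Phi_3}(\Omega)}\le \|g\|_{\pwm}$).

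Combining the two inclusions gives set-theoretic equality of the multiplier space and $\wL^{\Phi_3}(\Omega)$, and combining the two norm estimates yields the displayed two-sided bound. There is no genuine obstacle here; the only thing to keep track of is that the constant produced by Theorem~\ref{thm:Holder} enters with the factor $4$ on the upper bound while Theorem~\ref{thm:converse} contributes $C_1$ without any extra factor on the lower bound, which matches the constants in the statement exactly.
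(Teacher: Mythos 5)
Your proposal is correct and is exactly how the corollary follows in the paper: the right-hand inequality feeds into Theorem~\ref{thm:Holder} with $C=C_2$ to give the inclusion $\wL^{\Phi_3}(\Omega)\subset\pwm(\wL^{\Phi_1}(\Omega),\wL^{\Phi_2}(\Omega))$ and the upper bound $4C_2$, while the left-hand inequality, rewritten as $\Phi_2^{-1}(u)\le C_1\Phi_1^{-1}(u)\Phi_3^{-1}(u)$, feeds into Theorem~\ref{thm:converse} with $C=C_1$ to give the reverse inclusion and the lower bound. The constants match the statement, and no further argument is needed.
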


In the following, for functions $P,Q:[0,\infty)\to[0,\infty)$,
$P(t)\sim Q(t)$ means that
there exists a positive constant $C$ such that
$C^{-1}P(t)\le Q(t)\le CP(t)$ for all $t\in[0,\infty)$.

%%%===========================
\begin{exmp}\label{exmp:w LplogLq}
%%%===========================
Let $p_i,q_i\in[1,\infty)$, $i=1,2,3$, and
\begin{equation*}
 \Phi_i(t)=t^{p_i}\max(1,\log t)^{q_i}, \quad i=1,2,3.
\end{equation*}
Then
\begin{equation*}
 \Phi_i^{-1}(t)
 \sim
 t^{1/p_i}\max(1,\log t)^{-q_i/p_i}, \quad i=1,2,3.
\end{equation*}
Hence, if $1/p_1+1/p_3=1/p_2$ and $q_1/p_1+q_3/p_3=q_2/p_2$,
then 
\begin{equation*}
 \pwm(\wL^{\Phi_1}(\Omega),\wL^{\Phi_2}(\Omega))=\wL^{\Phi_3}(\Omega),
\end{equation*}
and $\|g\|_{\pwm(\wL^{\Phi_1}(\Omega),\wL^{\Phi_2}(\Omega))}$ 
is comparable to $\|g\|_{\wL^{\Phi_3}(\Omega)}$.
\end{exmp}

%%%===========================
\begin{exmp}\label{exmp:w expLp}
%%%===========================
Let $p_i,q_i\in[1,\infty)$, $i=1,2,3$, and
\begin{equation*}
 \Phi_i(t)=\exp(t^{p_i})-1, \quad i=1,2,3.
\end{equation*}
Then
\begin{equation*}
 \Phi_i^{-1}(t)
 \sim
 \begin{cases}
  t^{1/p_i}, & 0\le t<2, \\
  (\log t)^{1/p_i}, & 2\le t<\infty,
 \end{cases}
 \quad i=1,2,3.
\end{equation*}
Hence, if $1/p_1+1/p_3=1/p_2$,
then 
\begin{equation*}
 \pwm(\wL^{\Phi_1}(\Omega),\wL^{\Phi_2}(\Omega))=\wL^{\Phi_3}(\Omega),
\end{equation*}
and $\|g\|_{\pwm(\wL^{\Phi_1}(\Omega),\wL^{\Phi_2}(\Omega))}$ 
is comparable to $\|g\|_{\wL^{\Phi_3}(\Omega)}$.
\end{exmp}

%%%==========================================================
%%%==========================================================
\section{Properties of the quasi-norm}\label{sec:prop}
%%%==========================================================
%%%==========================================================

In this section we investigate the properties of the quasi-norm $\|\cdot\|_{\wLP(\Omega)}$
to prove the main results.

For two Young functions $\Phi$ and $\Psi$,
if there exist positive constants $C_1$ and $C_2$ such that
\begin{equation*}
 \Phi(C_1t)\le\Psi(t)\le\Phi(C_2t) \quad\text{for all $t\in[0,\infty]$},
\end{equation*}
then 
$\wLP(\Omega)=\wLPs(\Omega)$ and
\begin{equation*}
 C_1\|f\|_{\wLP(\Omega)}\le\|f\|_{\wLPs(\Omega)}\le C_2\|f\|_{\wLP(\Omega)}.
\end{equation*}

By the measure theory we have the following property:
\begin{equation}\label{meas converge}
 f_j\ge0 \ \text{and} \ f_j\nearrow f \ \text{a.e.}
 \quad\Longrightarrow\quad 
 \lim_{j}\mu(f_j,t)=\mu(f,t) \ \text{for each $t\in[0,\infty)$}.
\end{equation}
From this property 
and the left continuity of the Young function $\Phi$
we have the following property:
\begin{equation}\label{le1}
   \sup_{t\in(0,\infty)} t\,\mu\!\left( \Phi\left(\frac{|f(x)|}{\|f\|_{\wLP(\Omega)}}\right),t\right)\le1.
\end{equation}
We also have the Fatou property: 
\begin{multline}\label{Fatou}
 f_j\in \wLP(\Omega) \ (j=1,2\cdots), \ f_j\ge0, \ f_j\nearrow f \ \text{a.e. and} 
  \ \sup_j\|f_j\|_{\wLP(\Omega)}<\infty,
 \\
 \Longrightarrow\quad
 f\in \wLP(\Omega) \ \text{and} \ \|f\|_{\wLP(\Omega)}\le\sup_j\|f_j\|_{\wLP(\Omega)}.
\end{multline}

%%%===================================================================
\begin{prop} \label{prop:simple=1}
%%%===================================================================
If $\Phi\in\cY^{(1)}\cup\cY^{(2)}$ 
and $g$ is a finitely simple function and $g\not=0$, 
then $g\in\wLP(\Omega)$ and
\begin{equation*}
 \sup_{t\in(0,\infty)} t\,\mu\!\left( \Phi\left(\frac{|g(\cdot)|}{\|g\|_{\wLP(\Omega)}}\right),t\right)=1.
\end{equation*}
\end{prop}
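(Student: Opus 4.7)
The plan is to reduce to a finite combinatorial computation: expand $g$ as a simple function, compute $\mu(g,\cdot)$ as a step function, evaluate the weak-norm supremum in closed form, and finally invoke property (P3) (valid precisely for $\Phi\in\cY^{(1)}\cup\cY^{(2)}$) to upgrade the inequality $F(\lambda_{0})\le 1$ to equality. First, I would normalize $|g|=\sum_{i=1}^{n}a_{i}\chi_{E_{i}}$ with $a_{1}>\dots>a_{n}>0$ and pairwise disjoint sets $E_{i}$ of finite positive measure, and set $m_{k}:=\sum_{i=1}^{k}\mu(E_{i})$ and $a_{n+1}:=0$. Since $g$ is essentially bounded with support of finite measure, $g\in\LP(\Omega)\subset\wLP(\Omega)$, and hence $\lambda_{0}:=\|g\|_{\wLP(\Omega)}\in(0,\infty)$. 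The ``$\le 1$'' half of the claim is exactly \eqref{le1}, so writing
\[
F(\lambda):=\sup_{t>0}\Phi(t)\,\mu\!\left(\frac{g}{\lambda},t\right)=\sup_{s>0}\Phi(s/\lambda)\,\mu(g,s)
\]
(equivalent via $s=\lambda t$), it suffices to show $F(\lambda_{0})\ge 1$.

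Next, I would compute $\mu(g,\cdot)$ as the right-continuous step function equal to $m_{k}$ on $[a_{k+1},a_{k})$ for $k=0,1,\dots,n$ (with $m_{0}=0$, $a_{0}:=\infty$). On each plateau $[a_{k+1},a_{k})$ the map $s\mapsto\Phi(s/\lambda)m_{k}$ is non-decreasing, and since $\Phi$ is continuous on $[0,b(\Phi))$---by convexity together with \eqref{left cont infty}/\eqref{left cont b}, which hold for $\Phi\in\cY^{(1)}\cup\cY^{(2)}$---the supremum on that plateau is the left limit $\Phi(a_{k}/\lambda)m_{k}$. Hence, provided $a_{1}/\lambda<b(\Phi)$---automatic whenever $F(\lambda)<\infty$, in particular at $\lambda_{0}$---one obtains the closed-form expression
\[
F(\lambda)=\max_{1\le k\le n}\Phi(a_{k}/\lambda)\,m_{k}.
\]

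Finally, (P3) gives $\Phi(\Phi^{-1}(u))=u$ for all $u$, which combined with monotonicity of $\Phi$ yields the equivalence $\Phi(s)\le u\iff s\le\Phi^{-1}(u)$ for each finite $u>0$. Therefore $F(\lambda)\le 1\iff\lambda\ge\max_{1\le k\le n}a_{k}/\Phi^{-1}(1/m_{k})$, so $\lambda_{0}$ equals this maximum; picking an index $k^{*}$ achieving the max yields $a_{k^{*}}/\lambda_{0}=\Phi^{-1}(1/m_{k^{*}})$, whence $\Phi(a_{k^{*}}/\lambda_{0})m_{k^{*}}=(1/m_{k^{*}})m_{k^{*}}=1$ by (P3); this forces $F(\lambda_{0})\ge 1$, and together with $F(\lambda_{0})\le 1$ gives $F(\lambda_{0})=1$. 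The main technical care will be the closed-form evaluation of $F$, where one must interleave the right-continuity of $s\mapsto\mu(g,s)$ with the left-continuity of $\Phi$ to identify the supremum on each plateau as $\Phi(a_{k}/\lambda)m_{k}$; the hypothesis $\Phi\in\cY^{(1)}\cup\cY^{(2)}$ enters at exactly two spots---continuity of $\Phi$ up to the relevant values $a_{k}/\lambda_{0}<b(\Phi)$, and the identity $\Phi\circ\Phi^{-1}=\mathrm{id}$ rather than just $\le\mathrm{id}$---and the second use is precisely what promotes $F(\lambda_{0})\le 1$ to the equality $F(\lambda_{0})=1$.
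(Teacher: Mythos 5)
Your proof is correct and follows essentially the same route as the paper's: both reduce the supremum for a finitely simple function to the finite maximum $\max_{k}\Phi(a_k/\lambda)\,m_k$ over the level sets and then exploit that $\Phi\in\cY^{(1)}\cup\cY^{(2)}$ maps onto $(0,\infty)$. The only difference is the finishing step --- the paper shows $\lambda\mapsto I_{\Phi}(g/\lambda)$ is continuous, strictly decreasing and bijective onto $(0,\infty)$, so it equals $1$ exactly at the norm, whereas you solve $\|g\|_{\wLP(\Omega)}=\max_k a_k/\Phi^{-1}(1/m_k)$ explicitly and apply (P3) --- and both are valid.
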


\begin{proof}
Let 
\begin{equation*}
 I_{\Phi}(g)
 =
 \sup_{t\in(0,\infty)} t\,\mu(\Phi(|g(\cdot)|),t).
\end{equation*}

{\bf Case 1.}
$\Phi\in\cY^{(1)}$:
In this case $\Phi$ is strictly increasing and bijective from $(a(\Phi),\infty)$ to $(0,\infty)$.
Let $g$ be a finitely simple function. 
We may assume that $g\geq0$, i.e.,
\begin{equation*}
 g = \sum_{k=1}^N c_k \chi_{A_k},
 \quad
 0=c_0 < c_1 < c_2 < ... < c_N < \infty,\ 0 < \mu(A_k) < \infty,
\end{equation*}
where $A_k$ are pairwise disjoint. 
Then every $\Phi(c_k/\lambda)$ is continuous and nonincreasing with respect to $\lambda > 0$. 
Moreover, $\Phi(c_k/\lambda)$ is strictly decreasing on $(0, c_k/a(\Phi))$ 
(for $a(\Phi) = 0$ we understand $c_k/a(\Phi) = \infty$). 
Observing
\begin{multline*}
 \mu\,\left(\left\{x\in\Omega:\Phi\left(\frac{g(x)}{\lambda}\right)>t\right\}\right)
 =
 \mu\,\left(\left\{x\in\Omega:\sum_{k=1}^N \Phi\left(\frac{c_k}{\lambda}\right)\chi_{A_k}>t\right\}\right) \\
 =
 \sum_{k=j}^{N}\mu(A_k),
 \quad\text{if}\ \
 \Phi\left(\frac{c_{j-1}}{\lambda}\right)\le t<\Phi\left(\frac{c_j}{\lambda}\right),
 \quad j=1,2,\dots,N,
\end{multline*}
we have
\begin{equation*}
 I_{\Phi}\left(\frac{g}{\lambda}\right)
 =
 \sup_{t\in(0,\infty)} t\,\mu\!\left( \Phi\left(\frac{g}{\lambda}\right),t\right)
 =
 \max_{1\le j\le N} \Phi\left(\frac{c_j}{\lambda}\right)  \sum_{k=j}^{N}\mu(A_k).
\end{equation*}
Therefore, $I_{\Phi}(g/\lambda)$
is continuous and strictly decreasing on $(0, c_N/a(\Phi))$. 
Since $\lim_{\lambda \rightarrow 0} I_{\Phi}(g/\lambda) = \infty$ 
and $\lim_{\lambda \rightarrow c_N/a(\Phi)} I_{\Phi}(g/\lambda) = 0$, 
we obtain that $ I_{\Phi}(g/\cdot)$ is bijective from $(0, c_N/a(\Phi))$ to $(0, \infty)$.
That is, there exists a unique $\lambda\in(0, c_N/a(\Phi))$ such that $I_{\Phi}(g/\lambda) = 1$.

{\bf Case 2.}
$\Phi\in\cY^{(2)}$:
In this case $\Phi$ is strictly increasing and bijective from $(a(\Phi),b(\Phi))$ to $(0,\infty)$.
Let $g$ be a simple function as in Case~1.
Then, in the same way as in Case~1, we obtain that
$I_{\Phi}(g/\cdot)$ is bijective from $(c_N/b(\Phi), c_N/a(\Phi))$ to $(0, \infty)$.
That is, there exists a unique $\lambda\in(c_N/b(\Phi), c_N/a(\Phi))$ such that $I_{\Phi}(g/\lambda) = 1$.
%\qed
\end{proof}

In the rest of this section 
we show the following proposition.

%%%===================================================================
\begin{prop}\label{prop:weak norm eq}
%%%===================================================================
For any Young function $\Phi$,
\begin{equation}\label{weak norm eq}
 \sup_{t\in(0,\infty)} \Phi(t)\, \mu(f,t)
 =
 \sup_{u\in(0,\infty)} u\, \mu(f,\Phi^{-1}(u))
 =
 \sup_{u\in(0,\infty)} u\, \mu(\Phi(|f(\cdot)|),u).
\end{equation}
\end{prop}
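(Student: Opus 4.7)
The plan is to reduce the rightmost equality in \eqref{weak norm eq} to a pointwise identity of distribution functions, and then obtain the leftmost equality by the substitution $u = \Phi(t)$ combined with a right-continuity argument for $t \mapsto \mu(f,t)$.

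First I would verify that for every $u \in (0,\infty)$,
\[
 \{s \ge 0 : \Phi(s) > u\} = (\Phi^{-1}(u), \infty].
\]
The inclusion $\supset$ follows from the definition of $\Phi^{-1}(u)$ as an infimum together with monotonicity of $\Phi$: if $s > \Phi^{-1}(u)$, some point of the left-hand set lies in $[\Phi^{-1}(u), s)$, and monotonicity yields $\Phi(s) > u$. For $\subset$, property (P1) gives $\Phi(\Phi^{-1}(u)) \le u$, so $\Phi^{-1}(u)$ itself is absent from the left-hand set, forcing that set to be open at its infimum. Specializing $s = |f(x)|$ produces the pointwise identity $\mu(\Phi(|f|),u) = \mu(f,\Phi^{-1}(u))$ for every $u \in (0,\infty)$, which upon taking the supremum in $u$ delivers the second equality in \eqref{weak norm eq}.

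For the first equality I would prove two inequalities. For $\le$, fix $t \in (0,\infty)$. If $\Phi(t) \in (0,\infty)$, property (P2) gives $\Phi^{-1}(\Phi(t)) = t$, so setting $u = \Phi(t)$ yields $\Phi(t)\mu(f,t) = u\,\mu(f,\Phi^{-1}(u))$. If $\Phi(t) = 0$, the contribution is $0$. If $\Phi(t) = \infty$ and $\mu(f,t) > 0$, monotonicity of $\Phi$ forces $\Phi(|f|) = \infty$ on $\{|f|>t\}$, so $\mu(\Phi(|f|),u) \ge \mu(f,t) > 0$ for every $u > 0$, making both suprema infinite. For $\ge$, fix $u \in (0,\infty)$ and set $s = \Phi^{-1}(u)$; note $s \in (0,\infty)$ since $\lim_{t\to 0+}\Phi(t) = 0 < u$ and $\Phi$ eventually exceeds any finite level. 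Because $\Phi(s) \le u$ by (P1), the point $s$ is absent from $\{t : \Phi(t) > u\}$, so the infimum definition produces a sequence $t_n \searrow s$ with $\Phi(t_n) > u$. Right continuity of the distribution function (from $\{|f| > t_n\} \nearrow \{|f| > s\}$) gives $\mu(f,t_n) \to \mu(f,s)$, hence
\[
 \sup_{t>0}\Phi(t)\mu(f,t) \ge \Phi(t_n)\mu(f,t_n) \ge u\,\mu(f,t_n) \longrightarrow u\,\mu(f,s),
\]
and taking the supremum in $u$ completes the argument.

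The main obstacle I anticipate is the careful treatment of boundary behavior, especially for $\Phi \in \cY^{(3)}$ where $\Phi$ jumps to $+\infty$ at $b(\Phi)$: one must verify that mass of $|f|$ placed at or beyond $b(\Phi)$, and choices of $t$ slightly past $b(\Phi)$, are accounted for identically on both sides of \eqref{weak norm eq}, and that the ``infimum not attained'' step survives this jump (it does, precisely because (P1) applies uniformly in $u$).
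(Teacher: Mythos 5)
Your proof is correct, but it follows a genuinely different and more uniform route than the paper's. The paper establishes the set identity behind the second equality only for $u\in(0,\Phi(b(\Phi)))$ and only when $a(\Phi)<b(\Phi)$ (Lemma~\ref{lem:weak norm eq}), and then obtains both equalities by transporting suprema through the bijection $\Phi:(a(\Phi),b(\Phi))\to(0,\Phi(b(\Phi)))$; this forces a three-way case analysis ($\Phi\in\cY^{(1)}\cup\cY^{(2)}$; $\Phi\in\cY^{(3)}$ with $a<b$; $\Phi\in\cY^{(3)}$ with $a=b$) plus separate bookkeeping of the tails $t\ge b(\Phi)$ and $u\ge\Phi(b(\Phi))$, where each side is checked to be $0$ or $\infty$ simultaneously. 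Your global identity $\{s\ge0:\Phi(s)>u\}=(\Phi^{-1}(u),\infty]$ -- valid for every $u\in(0,\infty)$ and every Young function because the superlevel set is an up-set by monotonicity and its infimum is excluded by (P1) -- gives the second equality in one line with no cases, and your substitution $u=\Phi(t)$ via (P2) together with continuity of $\mu$ from below along $t_n\searrow\Phi^{-1}(u)$ replaces the bijectivity argument for the first equality; in particular the $\cY^{(3)}$ boundary issues you flag are indeed absorbed automatically. What the paper's structure buys is reuse: the trichotomy and the restricted lemma also organize Proposition~\ref{prop:simple=1} and the proof of Theorem~\ref{thm:converse}; what yours buys is a shorter, case-free proof of this proposition. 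Two points worth making explicit in a write-up: the term with $\Phi(t)=\infty$ and $\mu(f,t)=0$ contributes $0$ under the convention $\infty\cdot 0=0$, and the validity of (P2) rests on $\Phi$ being strictly increasing wherever it is positive and finite, which follows from convexity and $\Phi(0)=0$ (or can simply be cited as the paper states it).
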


%%%===================================================================
\begin{rem}\label{rem:weak norm eq}
%%%===================================================================
If $t=u=0$, 
then
\begin{equation*}
 \Phi(t)\, \mu(f,t)
 =
 u\, \mu(f,\Phi^{-1}(u))
 =
 u\, \mu(\Phi(|f(\cdot)|),u)
 =0,
\end{equation*}
since $\Phi(0)=0$.
If $t=u=\infty$, then 
\begin{equation*}
 \{x:|f(x)|>t\}
 =
 \{x:|f(x)|>\Phi^{-1}(u)\}
 =
 \{x:\Phi(|f(\cdot)|)>u\}
 =\emptyset,
\end{equation*}
since $\Phi^{-1}(\infty)=\infty$,
that is,
\begin{equation*}
 \Phi(t)\, \mu(f,t)
 =
 u\, \mu(f,\Phi^{-1}(u))
 =
 u\, \mu(\Phi(|f(\cdot)|),u)
 =0.
\end{equation*}
\end{rem}

%%%===================================================================
\begin{lem}\label{lem:weak norm eq}
%%%===================================================================
Let $\Phi$ be a Young function with $a(\Phi)<b(\Phi)$.
If $u\in(0,\Phi(b(\Phi)))$,
then
\begin{equation*}
 \{x:|f(x)|>\Phi^{-1}(u)\}
 =
 \{x:\Phi(|f(x)|)>u\}.
\end{equation*}
\end{lem}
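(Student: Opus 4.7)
The plan is to prove the two inclusions separately, using the definition $\Phi^{-1}(u)=\inf\{t\ge0:\Phi(t)>u\}$ together with property (P1), $\Phi(\Phi^{-1}(u))\le u$. Under the hypothesis $u<\Phi(b(\Phi))$, one verifies (in each of the cases $\Phi\in\cY^{(1)},\cY^{(2)},\cY^{(3)}$, using (Y1)--(Y3)) that the set $\{t\ge0:\Phi(t)>u\}$ is nonempty and that $\Phi^{-1}(u)<b(\Phi)$; in particular $\Phi^{-1}(u)$ is a finite infimum with elements of the set clustering at it from above.

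For the inclusion $\{x:\Phi(|f(x)|)>u\}\subset\{x:|f(x)|>\Phi^{-1}(u)\}$, I would simply combine the assumed strict inequality with (P1): if $\Phi(|f(x)|)>u$, then $\Phi(|f(x)|)>u\ge\Phi(\Phi^{-1}(u))$, and the monotonicity of $\Phi$ rules out $|f(x)|\le\Phi^{-1}(u)$, so $|f(x)|>\Phi^{-1}(u)$.

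For the reverse inclusion I would pick $x$ with $|f(x)|>\Phi^{-1}(u)$ and set $\varepsilon=|f(x)|-\Phi^{-1}(u)>0$. The defining property of the infimum (applied to a nonempty set) produces some $t^{\ast}$ with $\Phi^{-1}(u)\le t^{\ast}<\Phi^{-1}(u)+\varepsilon$ and $\Phi(t^{\ast})>u$. Since $t^{\ast}<|f(x)|$, monotonicity yields $\Phi(|f(x)|)\ge\Phi(t^{\ast})>u$.

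The argument is really a brief exercise in careful bookkeeping of strict versus nonstrict inequalities. The only delicate point is the reverse inclusion: one has to promote the strict inequality $|f(x)|>\Phi^{-1}(u)$ to a strict inequality $\Phi(|f(x)|)>u$ even though $\Phi$ could in principle be flat immediately to the right of $\Phi^{-1}(u)$. The infimum characterization of $\Phi^{-1}(u)$, made meaningful precisely by the hypothesis $u<\Phi(b(\Phi))$ (which guarantees that $\{t:\Phi(t)>u\}$ is nonempty so that the infimum is actually approached from above), is what supplies the witness $t^{\ast}$ and closes the argument.
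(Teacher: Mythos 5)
Your proof is correct, and it takes a genuinely different (though equally elementary) route from the paper's. The paper exploits the hypothesis $a(\Phi)<b(\Phi)$ to regard $\Phi$ as a strictly increasing bijection from $(a(\Phi),b(\Phi))$ onto $(0,\Phi(b(\Phi)))$, observes that $u\in(0,\Phi(b(\Phi)))$ forces $t=\Phi^{-1}(u)\in(a(\Phi),b(\Phi))$, and then splits into three cases according to whether $|f(x)|$ lies in $(a(\Phi),b(\Phi))$, is $\le a(\Phi)$, or is $\ge b(\Phi)$: in the middle case the equivalence follows from strict monotonicity, and in the two outer cases $x$ belongs to neither set, respectively to both. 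You avoid any case analysis on $|f(x)|$ and argue directly from the definition \eqref{inverse} of the generalized inverse as an infimum: (P1) gives $\Phi(\Phi^{-1}(u))\le u$, which by monotonicity yields the inclusion $\{\Phi(|f|)>u\}\subset\{|f|>\Phi^{-1}(u)\}$, while the nonemptiness of $\{t\ge0:\Phi(t)>u\}$ --- guaranteed by $u<\Phi(b(\Phi))$ together with the left-continuity conditions \eqref{left cont infty}--\eqref{left cont b} --- supplies a witness $t^*$ with $\Phi^{-1}(u)\le t^*<|f(x)|$ and $\Phi(t^*)>u$, giving the reverse inclusion. Your identification of the delicate point is exactly right: the only way the reverse inclusion could fail is if $\Phi$ stayed $\le u$ on an interval to the right of $\Phi^{-1}(u)$, and the infimum characterization rules this out (indeed it shows directly that $\Phi(s)>u$ for every $s>\Phi^{-1}(u)$). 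What the paper's route buys is an explicit picture of $\Phi$ on $(a(\Phi),b(\Phi))$ that is reused verbatim in the case analysis of the proof of Proposition~\ref{prop:weak norm eq}; what yours buys is independence from the bijectivity discussion, relying only on the defining property of the generalized inverse. Both arguments are complete.
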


\begin{proof}
Let $a=a(\Phi)$ and $b=b(\Phi)$.
Then 
$\Phi$ is bijective from $(a,b)$ to $(0,\Phi(b))$
in any case of 
$b<\infty$ or $b=\infty$;
$\Phi(b)<\infty$ or $\Phi(b)=\infty$.
Let $t=\Phi^{-1}(u)$. Then 
\begin{equation*}
 t\in(a,b) \Leftrightarrow u\in(0,\Phi(b)).
\end{equation*}
If $|f(x)|\in(a,b)$,
then 
\begin{equation*}
 |f(x)|>t\Leftrightarrow\Phi(|f(x)|)>\Phi(t).
\end{equation*}
That is,
\begin{equation*}
 |f(x)|>\Phi^{-1}(u)\Leftrightarrow\Phi(|f(x)|)>u.
\end{equation*}
If $|f(x)|\le a$,
then 
\begin{equation*}
 |f(x)|\le a<t=\Phi^{-1}(u)
 \quad\text{and}\quad
 \Phi(|f(x)|)=0<u.
\end{equation*}
If $|f(x)|\ge b$,
then 
\begin{equation*}
 |f(x)|\ge b>t=\Phi^{-1}(u)
 \quad\text{and}\quad
 \Phi(|f(x)|)\ge\Phi(b)>u.
\end{equation*}
Therefore, we have the conclusion.
%\qed
\end{proof}

%In the following proof we don't use the covexity of $\Phi$.

%%%===================================================================
\begin{proof}[Proof of Proposition~\ref{prop:weak norm eq}]
%%%===================================================================
Let $a=a(\Phi)$ and $b=b(\Phi)$.

\noindent
{\bf Case 1:} 
Let $\Phi\in\cY^{(1)}\cup\cY^{(2)}$. 
Then $\Phi$ is bijective from $(a,b)$ to $(0,\infty)$,
and then
\begin{align*}
 \sup_{t\in(0,b)} \Phi(t)\, \mu(f,t)
 &=
 \sup_{t\in(a,b)} \Phi(t)\, \mu(f,t) \\
 &=
 \sup_{u\in(0,\infty)} u\, \mu(f,\Phi^{-1}(u)) \\
 &=
 \sup_{u\in(0,\infty)} u\, \mu(\Phi(|f(\cdot)|),u),
\end{align*}
where we used Lemma~\ref{lem:weak norm eq} for the last equality.
%Let $t\in[b(\Phi),\infty)$.
If $b=\infty$, then the above equalities show \eqref{weak norm eq}.
If $b<\infty$, then
\begin{equation*}
 \sup_{t\in[b,\infty)} \Phi(t)\, \mu(f,t)=0 \quad\text{or}\quad \infty.
\end{equation*}
If $\dsup_{t\in[b,\infty)} \Phi(t)\, \mu(f,t)=0$, then 
\begin{equation*}
 \sup_{t\in(0,\infty)} \Phi(t)\, \mu(f,t)=\sup_{t\in(0,b)} \Phi(t)\, \mu(f,t).
\end{equation*}
If $\dsup_{t\in[b,\infty)} \Phi(t)\, \mu(f,t)=\infty$, then $\mu(f,b)>0$
and $\dlim_{t\to b-0}\Phi(t)\,\mu(f,t)=\infty$.
Hence
\begin{equation*}
 \sup_{t\in(0,\infty)} \Phi(t)\, \mu(f,t)
 =
 \sup_{t\in(0,b)} \Phi(t)\, \mu(f,t)
% =
% \sup_{u\in(0,\infty)} u\, \mu(f,\Phi^{-1}(u))
 =\infty.
\end{equation*}
Therefore, we have \eqref{weak norm eq}.

\noindent
{\bf Case 2:} 
Let $\Phi\in\cY^{(3)}$ and $a<b$.
Then $\Phi$ is bijective from $(a,b)$ to $(0,\Phi(b))$, 
and then
\begin{align*}
 \sup_{t\in(0,b)} \Phi(t)\, \mu(f,t)
 &=
 \sup_{t\in(a,b)} \Phi(t)\, \mu(f,t) \\
 &=
 \sup_{u\in(0,\Phi(b))} u\, \mu(f,\Phi^{-1}(u)) \\
 &=
 \sup_{u\in(0,\Phi(b))} u\, \mu(\Phi(|f(\cdot)|),u),
\end{align*}
where we used Lemma~\ref{lem:weak norm eq} for the last equality.
If $\mu(f,b)=0$, then 
\begin{equation*}
 \mu(f,\Phi^{-1}(u))=\mu(\Phi(|f(\cdot)|),u)=0 \quad\text{for} \ u\in[\Phi(b),\infty),
\end{equation*}
since $\Phi^{-1}(u)=b$ and
\begin{equation*}
 \{x:\Phi(|f(x)|)>u\}\subset\{x:\Phi(|f(x)|)>\Phi(b)\}\subset\{x:|f(x)|>b\}.
\end{equation*}
Hence,
\begin{equation*}
 \sup_{t\in[b,\infty)} \Phi(t)\, \mu(f,t)
 =
 \sup_{u\in[\Phi(b),\infty)} u\, \mu(f,\Phi^{-1}(u))
 =
 \sup_{u\in[\Phi(b),\infty)} u\, \mu(\Phi(|f(\cdot)|),u)
 =0.
\end{equation*}
If $\mu(f,b)>0$,
then $\mu(f,b+1/j)>0$ for some $j\in\N$ by the measure theory.
Hence,
\begin{equation*}
 \sup_{t\in(0,\infty)} \Phi(t)\, \mu(f,t)\ge
 \Phi(b+1/j)\mu(f,b+1/j)=\infty.
\end{equation*}
On the other hand,
$\mu(f,b)>0$ implies that,
for all $u\in(\Phi(b),\infty)$,
\begin{equation*}
 \mu(f,\Phi^{-1}(u))=\mu(f,b)>0,
 \quad
 \mu(\Phi(|f(\cdot)|),u)\ge\mu(\{x:\Phi(|f(x)|)=\infty\})>0
\end{equation*}
Hence,
\begin{equation*}
 \sup_{u\in(0,\infty)} u\, \mu(f,\Phi^{-1}(u))= \sup_{u\in(0,\infty)} u\, \mu(\Phi(|f(\cdot)|),u)=\infty.
\end{equation*}
Therefore, we have \eqref{weak norm eq}.

\noindent
{\bf Case 3:} 
Let $\Phi\in\cY^{(3)}$ and $a=b$.
Then $\Phi(t)=0$ for $t\in(0,b]$ and
$\Phi^{-1}(u)=b$ for $u\in(0,\infty)$.
If $\mu(f,b)=0$, then 
$|f(x)|\le b$ and $\Phi(|f(x)|)=0$ a.e.\,$x$.
Hence.
\begin{equation*}
 \sup_{t\in(0,\infty)} \Phi(t)\, \mu(f,t)
 =
 \sup_{u\in(0,\infty)} u\, \mu(f,\Phi^{-1}(u))
 =
 \sup_{u\in(0,\infty)} u\, \mu(\Phi(|f(\cdot)|),u)
 =0.
\end{equation*}
If $\mu(f,b)>0$, then, by the same way as Case~2,
we have
\begin{equation*}
 \sup_{t\in(0,\infty)} \Phi(t)\, \mu(f,t)
 =
 \sup_{u\in(0,\infty)} u\, \mu(f,\Phi^{-1}(u))
 =
 \sup_{u\in(0,\infty)} u\, \mu(\Phi(|f(\cdot)|),u)
 =\infty.
\end{equation*}
Therefore, we have \eqref{weak norm eq}.
%\qed
\end{proof}

%%%==========================================================
%%%==========================================================
\section{Proofs}\label{sec:proof}
%%%==========================================================
%%%==========================================================

%%%===========================
\begin{proof}[Proof of Theorem~\ref{thm:Holder}]\label{proop:thm:Holder}
%%%===========================
Let $f\in\wL^{\Phi_1}(\Omega)$ and $g\in\wL^{\Phi_3}(\Omega)$.
We may assume that $f,g\ge0$ and $\|f\|_{\wL^{\Phi_1}(\Omega)}=\|g\|_{\wL^{\Phi_3}(\Omega)}=1$.
Let
\begin{equation*}
 h(x)=\max\big(\Phi_1(f(x)),\Phi_3(g(x))\big).
\end{equation*}
Then, by the assumption \eqref{assumption1} and (P1),
\begin{align*}
 f(x)g(x)
 &\le 
 \Phi_1^{-1}(\Phi_1(f(x)))\, \Phi_3^{-1}(\Phi_3(g(x))) \\
 &\le 
 \Phi_1^{-1}(h(x))\, \Phi_3^{-1}(h(x))
 \le 
 C\Phi_2^{-1}(h(x)).
\end{align*}
Hence, by (P1),
\begin{equation*}
 \Phi_2\left(\frac{f(x)g(x)}C\right)
 \le \Phi_2(\Phi_2^{-1}(h(x)))
 \le h(x)
 \le \Phi_1(f(x))+\Phi_3(g(x)).
\end{equation*}
Then
\begin{align*}
 &\sup_{t\in(0,\infty)} t\,\mu\!\left(\Phi_2\left(\frac{f(x)g(x)}{4C}\right),t\right) \\
 &\le
 \sup_{t\in(0,\infty)} t\,\mu\!\left(\frac14\Phi_2\left(\frac{f(x)g(x)}{C}\right),t\right) \\
 &=
 \frac12\sup_{t\in(0,\infty)} t\,\mu\!\left(\Phi_2\left(\frac{f(x)g(x)}{C}\right),2t\right) \\
 &\le
 \frac12\sup_{t\in(0,\infty)} t\,\mu\big(\Phi_1(f(x))+\Phi_3(g(x)),2t\big) \\
 &\le
 \frac12\sup_{t\in(0,\infty)} t\,\bigg(\mu\big(\Phi_1(f(x)),t\big)+\mu\big(\Phi_3(g(x)),t\big)\bigg) \\
 &\le
 \frac12(1+1)=1,
\end{align*}
where we used \eqref{le1} for the last inequality.
Therefore,
$\|fg\|_{\wL^{\Phi_2}(\Omega)}\le4C$
and the proof is complete.
%\qed
\end{proof}

%%%===========================
\begin{proof}[Proof of Theorem~\ref{thm:converse}]\label{proop:thm:converse}
%%%===========================
{\bf Case 1.}
$\Phi_2$ and $\Phi_3$ are in $\cY^{(1)}\cup\cY^{(2)}$:
Let %$g\in\pwm(\wL^{\Phi_1}(\Omega),\wL^{\Phi_2}(\Omega))$.
\begin{equation*}
 g\in\pwm(\wL^{\Phi_1}(\Omega),\wL^{\Phi_2}(\Omega)).
\end{equation*}
Assume first that $g$ is a finitely simple function.
Then $g \in\wL^{\Phi_3}(\Omega)$ and
\begin{equation*}
 G(x) := \Phi_3 \left (\frac{|g(x)|}{\|g\|_{\wL^{\Phi_3}(\Omega)}} \right) < \infty 
 \quad\text{a.e. in}\ \Omega.
\end{equation*}
Put 
\begin{equation*}
 h(x) = 
 \begin{cases}
  \Phi_1^{-1}(G(x)) & \text{if}\ 0<G(x)<\infty, \\ 
  0                 & \text{if}\ G(x)=0.
 \end{cases}
\end{equation*}
From the property (P1) it follows that $\Phi_1(h(x)) \leq G(x)$ a.e. in $\Omega$ and
\begin{equation*}
% I_{\Phi_1}(h) \le I_{\Phi_3}\left(\frac{g}{\|g\|_{\wL^{\Phi_3}(\Omega)}}\right) \le 1,
 \sup_{t\in(0,\infty)}t\,\mu\Big({\Phi_1}(h),t\Big) 
 \le
 \sup_{t\in(0,\infty)}t\,\mu\!\left({\Phi_3}\left(\frac{|g(\cdot)|}{\|g\|_{\wL^{\Phi_3}(\Omega)}}\right),t\right)
 \le 1,
\end{equation*}
which gives $\|h\|_{\wL^{\Phi_1}(\Omega)} \le 1$.
Next we show that
\begin{equation}\label{Phi1>Phi3}
 \Phi_2\left( C h(x) \frac{|g(x)|}{\|g\|_{\wL^{\Phi_3}(\Omega)}} \right) 
 \ge
  G(x)=\Phi_3\left( \frac{|g(x)|}{\|g\|_{\wL^{\Phi_3}(\Omega)}} \right).
\end{equation}
If $G(x)>0$, then by the property (P2) and the assumption \eqref{assumption2},
\begin{align*}
 h(x)\, \frac{|g(x)|}{\|g\|_{\wL^{\Phi_3}(\Omega)}} 
 &=
 \Phi_1^{-1}(G(x))\,\Phi_3^{-1}\left( \Phi_3\left(\frac{|g(x)|}{\|g\|_{\wL^{\Phi_3}(\Omega)}}\right) \right) \\ 
 &= 
 \Phi_1^{-1}(G(x))\,\Phi_3^{-1}(G(x)) 
 \ge
 \frac{1}{C}\Phi_2^{-1}(G(x))
\end{align*}
and hence, by (P3),
\begin{equation*}
 \Phi_2\left( C h(x) \frac{|g(x)|}{\|g\|_{\wL^{\Phi_3}(\Omega)}} \right) 
 \ge
 \Phi_2\left(\Phi_2^{-1}(G(x)) \right) 
 =
 G(x).
\end{equation*}
If $G(x) = 0$, then $h(x) = 0$ and 
$\Phi_2 \left( C h(x) \frac{|g(x)|}{\|g\|_{\wL^{\Phi_3}(\Omega)}} \right) = 0$.
Thus, we have \eqref{Phi1>Phi3}.
By Proposition~\ref{prop:simple=1} we have
\begin{equation*}
 \sup_{t\in(0,\infty)}t\,
  \mu\!\left({\Phi_2}\left( C h(\cdot) \frac{|g(\cdot)|}{\|g\|_{\wL^{\Phi_3}(\Omega)}}\right),t\right)
 \ge
 \sup_{t\in(0,\infty)}t\,
    \mu\!\left({\Phi_3}\left( \frac{|g(\cdot)|}{\|g\|_{\wL^{\Phi_3}(\Omega)}}\right),t\right)
 =
 1
\end{equation*}
and so $\| h g\|_{\wL^{\Phi_2}(\Omega)} \ge \frac1C{\| g \|_{\wL^{\Phi_3}(\Omega)}}$, 
that is,  
\begin{equation*}
 \|g\|_{\pwm(\wL^{\Phi_1}(\Omega),\wL^{\Phi_2}(\Omega))} 
 \ge
 \frac{1}{C} \,\|g\|_{\wL^{\Phi_3}(\Omega)},
\end{equation*}
where we use the fact that the pointwise multiplier $g$ is a bounded operator.

In the general case, $g$ can be approximated by a sequence of finitely simple functions $\{g_j\}$ such that
$0\le g_j\nearrow|g|$ a.e. in $\Omega$,
%\begin{equation*}
%0 \leq g_1 \leq g_2 \leq ... \rightarrow |g| \ \ \text{a.e. in} \ \Omega,
%\end{equation*}
since $\mu$ is a $\sigma$-finite measure. Then
\begin{equation*}
 \|g\|_{\pwm(\wL^{\Phi_1}(\Omega),\wL^{\Phi_2}(\Omega))}
 \ge
 \|g_j\|_{\pwm(\wL^{\Phi_1}(\Omega),\wL^{\Phi_2}(\Omega))}
 \ge
 \frac{1}{C} \,\|g_j\|_{\wL^{\Phi_3}(\Omega)}
\end{equation*}
by our first part of the proof. 
Using the Fatou property \eqref{Fatou} of the quasi-norm $\|\cdot\|_{\wL^{\Phi_3}(\Omega)}$,
we obtain
\begin{equation*}
 \|g\|_{\pwm(\wL^{\Phi_1}(\Omega),\wL^{\Phi_2}(\Omega))}
 \ge
 \frac{1}{C} \,\|g\|_{\wL^{\Phi_3}(\Omega)}.
\end{equation*}

{\bf Case 2.} 
$\Phi_2\in\cY^{(3)}$ or $\Phi_3\in\cY^{(3)}$:
We consider only the case that both $\Phi_2$ and $\Phi_3$ are in $\cY^{(3)}$, 
since other cases are similar.
In this case, by (Y4), for all $0<\delta<1$, 
there exist $\Psi_2\in\cY^{(2)}$ and $\Psi_3\in\cY^{(2)}$ such that
\begin{equation*}\label{Phi-delta}
\Psi_2(\delta u) \le \Phi_2(u) \le \Psi_2(u), 
\quad 
\Psi_3(\delta u) \le \Phi_3(u) \le \Psi_3(u) 
\quad\text{for all $u$}.
\end{equation*}
It follows that
\begin{gather*}
 \delta\Phi_2^{-1}(u)\le\Psi_2^{-1}(u)\le\Phi_2^{-1}(u),
 \quad 
 \delta\Phi_3^{-1}(u)\le\Psi_3^{-1}(u)\le\Phi_3^{-1}(u), \\
 \delta\|g\|_{\wL^{\Psi_2}(\Omega)}\le\|g\|_{\wL^{\Phi_2}(\Omega)}\le\|g\|_{\wL^{\Psi_2}(\Omega)},
 \quad 
 \delta\|g\|_{\wL^{\Psi_3}(\Omega)}\le\|g\|_{\wL^{\Phi_3}(\Omega)}\le\|x\|_{\wL^{\Psi_3}(\Omega)}, 
\end{gather*}
and
\begin{equation*}
 \delta\|g\|_{\pwm(\wL^{\Phi_1}(\Omega),\wL^{\Psi_2}(\Omega))}
 \le
 \|g\|_{\pwm(\wL^{\Phi_1}(\Omega),\wL^{\Phi_2}(\Omega))}
 \le
 \|g\|_{\pwm(\wL^{\Phi_1}(\Omega),\wL^{\Psi_2}(\Omega))}.
\end{equation*}
Using the inequality
\begin{equation*}
 \Psi_2^{-1}(u)\le \frac{C}{\delta} \Phi_1^{-1}(u)\Psi_3^{-1}(u), 
\end{equation*}
which follows by \eqref{assumption2} and the definitions of $\Psi_2$ and $\Psi_3$, we have
\begin{equation*}
 \|g\|_{\pwm(\wL^{\Phi_1}(\Omega),\wL^{\Psi_2}(\Omega))}
 \ge
 \frac{\delta}{C} \,\|g\|_{\wL^{\Psi_3}(\Omega)}
\end{equation*}
by Case~1. 
Then
\begin{equation*}
 \|g\|_{\pwm(\wL^{\Phi_1}(\Omega),\wL^{\Phi_2}(\Omega))}
 \ge
 \frac{\delta^2}{C} \,\| g\|_{\wL^{\Phi_3}(\Omega)}
\end{equation*}
holds for all $0<\delta<1$.
Therefore,
\begin{equation*}
 \|g\|_{\pwm(\wL^{\Phi_1}(\Omega),\wL^{\Phi_2}(\Omega))}
 \ge
 \frac{1}{C} \,\|g\|_{\wL^{\Phi_3}(\Omega)},
\end{equation*}
and the proof is finished. 
%\qed
\end{proof}

%%%===================================================================
%%%===================================================================
\section*{Acknowledgement}
%%%===================================================================
%%%===================================================================
The authors would like to thank Professors~Hiro-o Kita and Takashi Miyamoto for their useful comments.
%The authors would like to thank the referees for their careful reading 
%and many useful comments.
The second author was supported by Grant-in-Aid for Scientific Research (B), 
No.~15H03621, Japan Society for the Promotion of Science.

%The results in this paper were proved when the first author was a student in Ibaraki University.

%\begin{acknowledgement}
%The authors would like to thank Professors~Hiro-o Kita and Takashi Miyamoto for their useful comments.
%\end{acknowledgement}

%%%==========================================================
%%%==========================================================
%%%==========================================================

\end{document}